\newcommand{\C}{{\mathbb C}}
\newcommand{\cf}{{\mathcal{F}}}
\newcommand{\la}{{\lambda}}
\newcommand{\z}{{\zeta}}
\renewcommand\Re{\operatorname{Re}}
\newtheorem{Theorem}{Theorem}
\newtheorem{Proposition}{Proposition}
\newtheorem{Remark}{Remark}
\title[The Spectrum of Volterra-type integration operators on Fock spaces]{The Spectrum of Volterra-type integration operators on generalized Fock spaces}
\author{Olivia Constantin and Anna-Maria Persson}
\date{}
\thanks{The first author was supported in part by the FWF project
P 24986-N25.}
\address{ Olivia Constantin,
Faculty of Mathematics,
University of Vienna,
Oskar-Morgenstern-Platz 1, 
1090 Vienna, 
Austria} \email {olivia.constantin@univie.ac.at}
\address{ Anna-Maria Persson, 
Department of Mathematics, Faculty of Science,
Lund University,
P.O. Box 118, S-221 00 Lund, Sweden} \email {Anna-Maria.Persson@math.lu.se}
\begin{document}
\maketitle

\begin{abstract} 
We describe the spectrum of certain integration operators acting on generalized  Fock spaces. 
\end{abstract}

{\small

\noindent
{\it Keywords:} generalized Fock spaces, integration operators, spectrum

\noindent
{\it AMS Mathematics Subject Classification (2000)}: 30H20, 47B38.}

\section{Introduction}
For analytic functions $f,g$ we consider the Volterra-type integration operator given by
$$
T_g f(z)=\int_0^z fg'\, d\z.
$$
There is an extensive literature regarding the boundedness, compactness, and Schatten class membership of 
$T_g$ on various spaces of analytic functions (see \cite{cima,asis} for Hardy spaces, \cite{ac,asi,oc,pp,ra,pelra} for weighted Bergman spaces,
\cite{ggp,gp} for Dirichlet spaces,  \cite{oc_f,ocpel} for Fock spaces, \cite{bonet,bonet-taskinen} for growth spaces of entire functions, 
as well as the surveys \cite{aa,sis} and references therein). One of the key tools in all these considerations is a Littlewood-Paley-type 
estimate for the target space of the operator, which, for a wide class of radial weights, can be obtained by standard techniques. In 
comparison to the above, there are considerably fewer investigations of its spectrum (see \cite{ac,aleman-pelaez,bonet}). 
The problem of describing the spectrum of $T_g$ is in general quite involved as it requires Littlewood-Paley-type 
estimates for non-radial weights dependent on the symbol $g$.

Assuming that $g(0)=0$, a straightforward calculation shows that for $\la\in\C\setminus \{0\}$, the equation 
$$
f-\frac{1}{\la} T_g f=h
$$ 
has the unique analytic solution
\begin{equation}\label{resolvent}
f(z)=R_{\la,g} h(z)= h(0) e^{g(z)/\la}+ e^{g(z)/\la}\int_0^z e^{-g(\z)/\la} h'(\z) d\z,\quad z\in\C.
\end{equation}
Thus the resolvent set of $T_g$ consists precisely of those points $\la\in\C$ for which $R_{\la,g}$
is a bounded operator. One useful feature is that the spectrum of 
$T_g$ is closely related to the behaviour of the exponentials $e^{g/\lambda}$, $\la\in\C\setminus \{0\}$. 
Notice that for $h\equiv 1$ in (\ref{resolvent}) we get $ R_{\la,g} 1= e^{g/\la}$. This shows that if $T_g$ is bounded 
on some Banach space $X$ of analytic functions which contains the constants and on which point evaluations are bounded, then 
$e^{g/\la}\in X$ whenever $\la$ belongs to the resolvent set of $T_g$. This fact was first noticed 
by Pommerenke \cite{pommerenke} in the setting of $X=H^2$, who relied on it in order to provide a slick proof of the John-Nirenberg inequality. 
Hence, if $X$ is as above, then
$$\{0\} \cup \overline{\{\la \in \C \setminus\{0\}:\ e^{g/\la} \not\in X\}}\subset \sigma(T_g|X)\,.$$
Whether the reverse inclusion holds for all symbols $g$ for which $T_g$ is bounded depends on the space $X$. For example, 
this fails in the case of Bergman spaces and  of Hardy spaces (see \cite{ac,aleman-pelaez}).

Another observation worthwhile making is that the point spectrum of $T_g$ is empty. This is an immediate consequence of the 
form of $T_g$. In particular, if $T_g$ is compact, then $\sigma(T_g)=\{0\}$.  

The aim of this note is to describe the spectrum of $T_g$ when acting on the generalized Fock spaces $\cf^p_{\alpha,A}$ with $\ p\ge1,\, \alpha,A>0$,
which consist of entire functions $f$ such that
$$
\|f\|_{p,\alpha,A}=
\Bigl( 
\int_\C \Bigl| f(z) e^{-\alpha |z|^A}\Bigr|^p dA(z)
\Bigr)^{\frac{1}{p}}<\infty,
$$  
where $dA$ denotes the Lebesgue area measure on $\C$. 
The operator $T_g$ is bounded on $\cf^p_{\alpha,A}$ if and only if the symbol $g$ is a polynomial of degree $\leq A$, 
while its compactness is equivalent to $degree(g)< A$ (see \cite{ocpel}).
For integer values of A we obtain the following description for the spectrum of $T_g:\cf^p_{\alpha,A}\rightarrow \cf^p_{\alpha,A}$
$$
 \sigma(T_g)=\{0\}\cup\overline{\{\la \in \C \setminus\{0\}:\ e^{g/\la} \not\in  \cf^p_{\alpha,A} \}}= \Bigl \{\la \in \C :\ |\la|\le \frac{|b|}{\alpha}\Bigr\}, 
$$ 
where $b$ is the leading coefficient of the polynomial $g$ (that is, $g(z)=bz^{A}+"lower\  terms"$). If $A\not\in \mathbb{Z}$, it turns out that
the operator $T_g$ is automatically compact once it is bounded, and, since its point spectrum is empty, we obtain $\sigma(T_g)=\{0\}$.
The particular case $p=2, A=\frac{1}{2}$, was treated in \cite{oc_f} by a method based on the Hilbert space property of the classical
Fock space, as well as the explicit formula for its reproducing kernel. Our present approach relies on a suitable application of Stokes' formula,
using some properties of the integral means of functions in $\cf^p_{\alpha,A}$.
The case $p=\infty$, corresponding to growth spaces of entire functions $\cf^\infty_{\alpha,A}$,  was recently treated by Bonet in \cite{bonet}.
\bigskip\bigskip

\section{Main result}
\noindent Let us begin with some preliminary facts that will be used in our further considerations.
The following Littlewood-Paley estimate for $\cf^p_{\alpha,A}$ was obtained in \cite{ocpel}.
\medskip

\noindent {\bf Proposition A.} {\it We have
$$\|f\|_{p,\alpha,A}^p\sim|f(0)|^p+\int_{\C} |f'(z)|^p\frac{e^{-p|z|^A}}{(1+|z|)^{p(A-1)}}dA(z),$$
\noindent for any entire function $f$.}

\begin{Remark}\label{remarca}
For an entire function $f$, the maximum principle and the subharmonicity
of $|f|^p$ ensure
\begin{equation*}\label{maxprinc}
\int_\C\frac{|f(z)|^p}{(1+|z|)^p} e^{-p\alpha |z|^A} dA(z) \sim \int_{|z|>1} \frac{|f(z)|^p}{(1+|z|)^p} e^{-p\alpha |z|^A} dA(z)\sim
\int_{|z|>1} \Bigl|\frac{f(z)}{z}\Bigr|^p e^{-p\alpha |z|^A} dA(z).
\end{equation*}
Moreover, if $f(0)=0$, the above quantities are comparable to $\int_{\C} \left|\frac{f(z)}{z}\right|^p e^{-p\alpha |z|^A} dA(z)$.
\end{Remark}

The key ingredient in the proof of our main result is the following estimate:

\begin{Proposition}\label{LPF}
Let $b,\la\in\C$ and assume $\alpha>|b/\la|$.  For a positive integer $A$, let $g(z)=bz^A,\, z\in\C$.  If $f$ is an entire function such that
$fe^{g/\la}\in \cf^p_{\alpha,A}$, then the following inequality holds
\begin{equation}\label{desired_ineq}
\int_{\C}|f(z)|^p |e^\frac{g(z)}{\la}|^pe^{-p\alpha|z|^A}dA(z)\lesssim|f(0)|^p+\int_{\C}|f'(z)|^p |e^{\frac{g(z)}{\la}}|^p\frac{e^{-p\alpha|z|^A}}{(1+|z|)^{(A-1)p}}dA(z),
\end{equation}
where the involved constants are independent of $f$.

\end{Proposition}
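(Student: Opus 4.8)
\emph{Proof sketch.} The plan is to recast (\ref{desired_ineq}) as a weighted Littlewood--Paley estimate for the weight $e^{-p\Phi}$, where $\Phi(z):=\alpha|z|^A-\Re\bigl(g(z)/\la\bigr)=\alpha|z|^A-\Re\bigl(bz^A/\la\bigr)$. Since $|e^{g(z)/\la}|^pe^{-p\alpha|z|^A}=e^{-p\Phi(z)}$, setting $L:=\int_\C|f|^pe^{-p\Phi}\,dA$ and $X:=\int_\C|f'|^p\,e^{-p\Phi}(1+|z|)^{-(A-1)p}\,dA$, the claim becomes $L\lesssim|f(0)|^p+X$, and the hypothesis $fe^{g/\la}\in\cf^p_{\alpha,A}$ says exactly that $L<\infty$. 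Only two properties of $\Phi$ are needed: it is positively homogeneous of degree $A$, so $\Phi(re^{i\theta})=r^A\phi_0(\theta)$ for a continuous $2\pi$-periodic $\phi_0$; and, because $\alpha>|b/\la|$, one has $0<\gamma\le\phi_0\le\alpha+|b/\la|$ with $\gamma:=\alpha-|b/\la|$. Next I would integrate by parts along rays — the concrete form of the Stokes' formula alluded to in the introduction. For fixed $\theta$, $s\mapsto|f(se^{i\theta})|^p$ is locally absolutely continuous with $\bigl|\tfrac{d}{ds}|f(se^{i\theta})|^p\bigr|\le p|f(se^{i\theta})|^{p-1}|f'(se^{i\theta})|$, whence $|f(re^{i\theta})|^p\le|f(0)|^p+p\int_0^r|f(se^{i\theta})|^{p-1}|f'(se^{i\theta})|\,ds$; inserting this into $L=\int_0^{2\pi}\!\int_0^\infty|f(re^{i\theta})|^pe^{-pr^A\phi_0(\theta)}\,r\,dr\,d\theta$ and using Tonelli's theorem to exchange the $r$- and $s$-integrations yields
$$L\ \le\ |f(0)|^p\int_\C e^{-p\Phi}\,dA\ +\ p\int_0^{2\pi}\!\!\int_0^\infty|f(se^{i\theta})|^{p-1}|f'(se^{i\theta})|\,K(s,\theta)\,ds\,d\theta,\qquad K(s,\theta):=\int_s^\infty r\,e^{-pr^A\phi_0(\theta)}\,dr.$$
Since $\Phi\ge\gamma|z|^A$ and $A\ge1$, $\int_\C e^{-p\Phi}\,dA<\infty$, so the first term is $\lesssim|f(0)|^p$.

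The heart of the proof is the estimate for $K(s,\theta)$, uniform in $\theta$. One has $K(s,\theta)\le\int_0^\infty re^{-p\gamma r^A}\,dr=:c_0<\infty$ for all $s$; and for $s\ge1$, using the convexity inequality $r^A-s^A\ge As^{A-1}(r-s)$ (here $A\ge1$ is used) together with $\phi_0\ge\gamma$, the substitution $r=s+v$ gives
$$K(s,\theta)=e^{-ps^A\phi_0(\theta)}\int_s^\infty re^{-p\phi_0(\theta)(r^A-s^A)}\,dr\ \le\ e^{-p\Phi(se^{i\theta})}\int_0^\infty(s+v)\,e^{-p\gamma As^{A-1}v}\,dv\ \lesssim\ s^{2-A}e^{-p\Phi(se^{i\theta})}.$$
Splitting the $s$-integral at $s=1$: the part over $\{s\le1\}$ is $\le c_0\sup_{|w|\le1}\bigl(|f(w)|^{p-1}|f'(w)|\bigr)$, which by a standard sub-mean-value estimate on a fixed disc — bounding $\sup_{|w|\le1}|f'|^p$ by $\int_{|z|\le3/2}|f'|^p\,dA\lesssim X$ and $\sup_{|w|\le1}|f|$ by $|f(0)|+\sup_{|w|\le1}|f'|$ — combined with Young's inequality is $\lesssim|f(0)|^p+X$; the part over $\{s\ge1\}$, by the bound on $K$ and after converting back to area measure, is $\lesssim\int_{|z|>1}|f(z)|^{p-1}|f'(z)|\,e^{-p\Phi(z)}(1+|z|)^{-(A-1)}\,dA(z)$. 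For $p=1$ the latter is $\le X$ and we are done; for $p>1$, writing the integrand as $\bigl(|f|^pe^{-p\Phi}\bigr)^{(p-1)/p}\bigl(|f'|^pe^{-p\Phi}(1+|z|)^{-(A-1)p}\bigr)^{1/p}$ and applying H\"older's inequality with exponents $p/(p-1)$ and $p$ bounds it by $L^{(p-1)/p}X^{1/p}$.

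Collecting the estimates yields $L\lesssim|f(0)|^p+X+L^{(p-1)/p}X^{1/p}$. Finally I would apply Young's inequality, $L^{(p-1)/p}X^{1/p}\le\ep L+C_\ep X$, and — crucially — the a priori finiteness of $L$ (which is precisely the standing hypothesis), to absorb $\ep L$ into the left-hand side for $\ep$ small, reaching $L\lesssim|f(0)|^p+X$, that is, (\ref{desired_ineq}). The step I expect to be the genuine obstacle is the $\theta$-uniform kernel estimate for $K(s,\theta)$: it is there that the positivity $\gamma=\alpha-|b/\la|>0$ and the degree-$A$ homogeneity of $\Phi$ are actually exploited, and it is the finiteness of $L$, secured by the assumption $fe^{g/\la}\in\cf^p_{\alpha,A}$, that makes the concluding absorption a legitimate (rather than circular) step.
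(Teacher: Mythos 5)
Your proof is correct, but it takes a genuinely different route from the paper's. The paper applies Stokes' theorem in the plane to $|f|^pe^w$ with $w=p\Re(g/\la)-p\alpha|z|^A$, using the pointwise bounds $|\bar\partial w|\gtrsim |z|^{A-1}$ and $|\bar\partial^2w|\lesssim |z|^{A-2}$; this produces, besides the H\"older term $L^{(p-1)/p}X^{1/p}$, an extra error term $\int_\C |f|^p|z|^{-A}e^w\,dA$ and a boundary term on $|z|=R$. Handling these forces two further arguments there: the boundary term is shown to vanish via a limiting argument with integral means (this is where $fe^{g/\la}\in\cf^p_{\alpha,A}$ enters), and the error term is absorbed by a normal-families contradiction argument, which in turn requires first proving the estimate for $f$ vanishing to order $>A$ at the origin and then removing that restriction by subtracting the Taylor polynomial and using Cauchy-formula estimates. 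You avoid all of this by integrating by parts along rays: the homogeneity $\Phi(re^{i\theta})=r^A\phi_0(\theta)$ with $\gamma\le\phi_0\le\alpha+|b/\la|$, the convexity inequality $r^A-s^A\ge As^{A-1}(r-s)$, and the resulting uniform kernel bound $K(s,\theta)\lesssim s^{2-A}e^{-p\Phi(se^{i\theta})}$ play exactly the role of the paper's lower bound on $|\bar\partial w|$, while your final absorption step $L\lesssim |f(0)|^p+X+L^{(p-1)/p}X^{1/p}\Rightarrow L\lesssim |f(0)|^p+X$ uses the a priori finiteness $L<\infty$ (the standing hypothesis) in the place where the paper uses it for the boundary term and the normalization $\|f_n\|_{L^p(e^w)}=1$. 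What your route buys is a more elementary and quantitative proof: no compactness or contradiction argument, no separate reduction to functions with a high-order zero at the origin (the split at $s=1$ together with the local sup-estimate handles small radii directly), and explicit constants; the trade-off is that it leans on the exact degree-$A$ homogeneity of $\Phi$, whereas the paper's Stokes-based scheme only needs bounds on $\bar\partial w$ and $\bar\partial^2w$ and is therefore the more flexible template for less rigid weights.
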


\begin{proof}
We shall first prove the statement for functions $f$ that have a zero of order strictly greater than $A$ at zero.

Denote $\displaystyle w(z)=p\Re \left(\frac{g(z)}{\la}\right)-p\alpha |z|^A,\, z\in\C$. 
Applying Stokes' Theorem and a limiting argument we deduce
\begin{align}\label{stokes}
\int_{\C}|f|^pe^wdA=&\int_{\C}|f|^p\frac{\bar{\partial} (e^w)}{\bar{\partial}w}dA=-\int_{\C}\bar{\partial}\left(\frac{|f|^p}{\bar{\partial} w}\right)e^wdA\\\nonumber
=&-\int_{\C}\bar{\partial}\left(|f|^p\right)\frac{1}{\bar{\partial} w}e^wdA+\int_{\C}|f|^p\frac{\bar{\partial} ^2w}{(\bar{\partial}w)^2}e^wdA .\nonumber
\end{align}
Above we used the fact that  
\begin{equation}\label{eq:boundaryterm}
\lim_{R\to\infty}\Bigr|\int_{|z|=R}\frac{|f|^p}{\bar{\partial}w}\cdot e^wdz\bigl|=0,
\end{equation}
whose proof is deferred for later for the sake of clarity of exposition.
To estimate the first integral on the right hand side of (\ref{stokes}), denoted $I_1$,  we write
\begin{align}\label{I1}
|I_1|=\left|\int_{\C}\bar{\partial}\left(|f|^p\right)\frac{1}{\bar{\partial} w}e^wdA\right|=&\frac{p}{2}\left|\int_{\C}\frac{|f|^p}{\bar f}\bar{f'}\cdot\frac{e^w}{\bar{\partial}w}dA\right|\\\nonumber
\lesssim&\int_{\C}|f|^{p-1}\left|\frac{f'}{\bar{\partial}w}\right|e^wdA\\\nonumber
\leq&\left(\int_{\C}|f|^pe^wdA\right)^{\frac{p-1}{p}}\cdot\left(\int_{\C}\frac{|f'|^p}{|\bar{\partial}w|^p}e^wdA\right)^{\frac{1}{p}},\nonumber
\end{align}
where the last step above follows by H\"older's inequality.

\noindent A direct calculation now gives
\begin{align*}
\bar{\partial}w
=&p\left(\frac{\bar{b}A}{2\bar{\la}}\bar{z}^{A-1}-\frac{\alpha A}{2} z^{\frac{A}{2}}\bar{z}^{\frac{A}{2}-1}\right),\\\nonumber 
\bar{\partial}^2w=&p\left(\frac{\bar{b}A(A-1)}{2\bar{\la}}\bar{z}^{A-2}-\frac{\alpha A(A/2-1)}{2} z^{\frac{A}{2}}\bar{z}^{\frac{A}{2}-2}\right),\nonumber
\end{align*}
and hence we have
\begin{equation}\label{eq:star}
|\bar{\partial}w|\geq\frac{pA}{2}\left(-\left|\frac{b}{\la}\right|+\alpha\right)|z|^{A-1}, \quad\quad |\bar{\partial}^2w|\lesssim|z|^{A-2}.
\end{equation}
Use this in (\ref{I1}) to get
$$|I_1|\lesssim \left(\int_{\C}|f|^pe^wdA\right)^{\frac{p-1}{p}}\cdot\left(\int_{\C}\frac{|f'|^p}{|z|^{(A-1)p}}e^wdA\right)^{\frac{1}{p}},$$
which together with relation (\ref{stokes}) yields
\begin{align}\label{eq:assumed_ineq}
\int_{\C}|f|^pe^wdA\lesssim& \left(\int_{\C}|f|^pe^wdA\right)^{\frac{p-1}{p}}\cdot\left(\int_{\C}\frac{|f'|^p}{|z|^{(A-1)p}}e^wdA\right)^{\frac{1}{p}}+\int_{\C}|f|^p\frac{|z|^{A-2}}{|z|^{2A-2}}e^wdA \\\nonumber
=& \left(\int_{\C}|f|^pe^wdA\right)^{\frac{p-1}{p}}\cdot\left(\int_{\C}\frac{|f'|^p}{|z|^{(A-1)p}}e^wdA\right)^{\frac{1}{p}}+\int_{\C}\frac{|f|^p}{|z|^{A}}e^wdA .\nonumber 
\end{align}
We now claim that the above relation implies
\begin{equation}\label{claim1}
 \left(\int_{\C}|f|^pe^wdA\right)^{\frac{1}{p}}\lesssim\left(\int_{\C}\frac{|f'|^p}{|z|^{(A-1)p}}e^wdA\right)^{\frac{1}{p}},
\end{equation}
if $f$ has a zero of order $>A$ at the origin. Once (\ref{claim1}) is proven,  (\ref{desired_ineq}) will follow in view of Remark \ref{remarca}.
We use an argument by contradiction.
If (\ref{claim1}) did not hold, then we could find a sequence $(f_n)_{n=1}^{\infty}$ of entire functions with $\|f_n\|_{L^p(e^w)}=1$ and $\displaystyle\int_{\C}\frac{|f_n'|^p}{|z|^{(A-1)p}}e^wdA<\frac{1}{n}$.
For any compact $K\subset\C$, the point evaluation estimate for $\cf^p_{\alpha,A}$ (see \cite{ortega,ocpel}) gives
$$\left|\frac{f_n'(z)}{z^{A-1}}\right|\cdot|e^{\frac{g(z)}{\la}}|\lesssim  c_K \Bigl\| \frac{f_n'(z)}{z^{A-1}}e^{\frac{g(z)}{\la}}\Bigr\|_{p, \alpha,A}\le c_K\left(\frac{1}{n}\right)^{\frac{1}{p}},$$
for some constant $c_K>0$.
This shows that  the sequence $f_n'$ converges to $0$ uniformly on compacts, and hence the same holds for the sequence $f_n$.

From (\ref{eq:assumed_ineq}) we deduce
$$1\lesssim\left(\frac{1}{n}\right)^{\frac{1}{p}}+\int_{|z|\leq R}\frac{|f_n|^p}{|z|^A}e^wdA+\frac{1}{R^A}\int_{|z|>R}|f_n|^pe^wdA.$$
Since by the maximum principle $\displaystyle \frac{f_n}{z^A}$ tends uniformly to $0$ on $|z|\leq R$, the first integral on the right-hand side of the inequality above tends to $0$ as $n\to \infty$. We now use the fact that $\|f_n\|_{L^p(e^w)}=1$  and let $R\rightarrow\infty$ 
to get the desired contradiction.  

It remains to show that the condition (\ref{eq:boundaryterm}) involving the boundary integral holds.
By (\ref{eq:star}) we obtain
\begin{align}\label{eq:boundary_term}
\Bigr|\int_{|z|=R}\frac{|f|^p}{\bar{\partial}w}\cdot e^wdz\bigl|
\lesssim&\int_{0}^{2\pi}\frac{|(fe^{\frac{g}{\la}})(Re^{i\theta})|^pe^{-p\alpha R^A}}{|\bar{\partial}w(Re^{i\theta})|}Rd\theta\nonumber\\
\sim\ &\frac{e^{-p\alpha R^A}}{R^{A-2}}\int_{0}^{2\pi} |(fe^{\frac{g}{\la}})(Re^{i\theta})|^p d\theta.
\end{align}
Let now $\psi=fe^{\frac{g}{\la}}$ and consider $\displaystyle M_{p,R}^p(\psi)=\int_0^{2\pi}|\psi(Re^{i\theta})|^pd\theta$.
By assumption, $\psi\in\cf^p_{\alpha,A}$, that is,
$$\|\psi\|_{\cf^p_{\alpha,A}}=\int_0^\infty M_{p,r}^p(\psi) e^{-p\alpha r^A}rdr<\infty$$
and hence
$$\lim_{R\to\infty}\int_R^\infty r e^{-p\alpha r^A} M_{p,r}^p(\psi) dr =0.$$ 
Since the integral means  $M_{p,R}$ are increasing in $R$, this implies that 
$$ \lim_{R\to\infty}M_{p,R}^p(\psi)\int_R^\infty r e^{-p\alpha r^A} dr =0,$$
and therefore the right-hand side in (\ref{eq:boundary_term}) can be written as 
\begin{align}\label{eq:boundary_term_again}
\frac{e^{-p\alpha R^A}}{R^{A-2}}M_{p,R}^p(\psi)=&\frac{e^{-p\alpha R^A}\cdot R^{2-A}}{\int_R^\infty r e^{-p\alpha r^{A}}dr}M_{p,R}^p(\psi)\int_R^\infty r e^{-p\alpha r^{A}}dr\nonumber\\
=&\frac{e^{-p\alpha R^A}\cdot R^{2-A}}{\int_R^\infty r e^{-p\alpha r^{A}}dr} o(1)\quad \hbox{ as } R\rightarrow\infty.
\nonumber
\end{align}
By L'Hospital's rule we see that the first factor above is bounded and hence (\ref{eq:boundaryterm}) is proven. 


\noindent The conclusion for arbitrary $f$ is obtained by applying (\ref{eq:boundaryterm}) to $\displaystyle f(z)-\sum_{k=0}^{A-1}\frac{f^{(k)}(0)}{k!}z^k$, taking into account Remark \ref{remarca}, and the fact that 
$$|f^{(k)}(0)|\lesssim\int_{\C}\frac{|f'|^p}{(|z|+1)^{(A-1)p}}e^w dA,\quad 1\leq k\leq A,$$
which follows by the Cauchy formula and subharmonicity.
\end{proof}

\begin{Theorem} Let $\alpha, A>0,\, p\ge 1$ and assume  $T_g: \cf^p_{\alpha,A}\rightarrow\cf^p_{\alpha,A}$ is bounded, that is, $g$ is a polynomial with $\hbox{degree}(g)\le A$.
Then we have
\smallskip

\begin{enumerate}
\item[$\hbox{(I)}$] If $A\not\in\mathbb{N}$, then $\sigma(T_g)=\{0\}$.
\medskip

\item[$\hbox{(II)}$] If $A\in\mathbb{N}$, then 
\begin{equation}\label{spectru}
 \sigma(T_g)
 = \Bigl \{\la \in \C :\ |\la|\le \frac{|b|}{\alpha}\Bigr\}
\end{equation}
where $b$ is the coefficient of $z^A$ in the expansion of $g$.
\end{enumerate}
\end{Theorem}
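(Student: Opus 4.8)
The plan is to distinguish three cases. If $A\notin\mathbb{N}$, then boundedness of $T_g$ forces $\deg g\le A$ and hence $\deg g<A$, so $T_g$ is compact on $\cf^p_{\alpha,A}$ by \cite{ocpel}; as noted in the introduction this gives $\sigma(T_g)=\{0\}$, which is (I). The same remark settles (II) when $A\in\mathbb{N}$ and $b=0$, since then $\deg g<A$ and $\{\la:|\la|\le|b|/\alpha\}=\{0\}$. So from now on I assume $A\in\mathbb{N}$ and $b\ne0$.

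For the inclusion $\{\la:|\la|\le|b|/\alpha\}\subseteq\sigma(T_g)$ I would invoke the fact recalled in the introduction that $\{0\}\cup\overline{\{\la\in\C\setminus\{0\}:\ e^{g/\la}\notin\cf^p_{\alpha,A}\}}\subseteq\sigma(T_g)$. Since the closure of the punctured disc $\{0<|\la|<|b|/\alpha\}$ equals $\{|\la|\le|b|/\alpha\}$, it suffices to check that $e^{g/\la}\notin\cf^p_{\alpha,A}$ for all $\la$ with $0<|\la|<|b|/\alpha$. Writing $z=re^{i\theta}$ and $b/\la=|b/\la|e^{i\beta}$, one has $|e^{g(z)/\la}|^pe^{-p\alpha|z|^A}=\exp\bigl(p\bigl(|b/\la|\cos(A\theta+\beta)-\alpha\bigr)r^A+O(r^{A-1})\bigr)$; on each of the $A$ open sectors on which $\cos(A\theta+\beta)$ stays close to $1$ the exponent is at least $c\,r^A$ with $c>0$ for large $r$ (here $|b/\la|>\alpha$), so the integral over such a sector diverges and $e^{g/\la}\notin\cf^p_{\alpha,A}$.

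For the reverse inclusion I would show that every $\la$ with $|\la|>|b|/\alpha$ lies in the resolvent set, i.e.\ that $R_{\la,g}$ from (\ref{resolvent}) is bounded on $\cf^p_{\alpha,A}$. I first note that Proposition \ref{LPF} and its proof remain valid for an \emph{arbitrary} polynomial $g$ of degree $\le A$ with coefficient of $z^A$ equal to $b$: the argument only uses that $w=p\Re(g/\la)-p\alpha|z|^A$ satisfies $|\bar\partial w|\gtrsim|z|^{A-1}$ for large $|z|$ (which still holds because $\alpha>|b/\la|$) and $|\bar\partial^2 w|\lesssim|z|^{A-2}$, together with the hypothesis $fe^{g/\la}\in\cf^p_{\alpha,A}$. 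Now fix $h\in\cf^p_{\alpha,A}$ and put $f=R_{\la,g}h$; then $f(0)=h(0)$, and differentiating $f-\frac1\la T_gf=h$ gives $f'-\frac{g'}{\la}f=h'$, so that $v:=fe^{-g/\la}$ satisfies $v(0)=h(0)$ and $v'e^{g/\la}=h'$. Assuming momentarily that $f\in\cf^p_{\alpha,A}$, an application of (the extended) Proposition \ref{LPF} to $v$ followed by Proposition A gives
\[
\|f\|_{p,\alpha,A}^p=\int_\C|v|^p|e^{g/\la}|^pe^{-p\alpha|z|^A}\,dA\lesssim|h(0)|^p+\int_\C|h'|^p\frac{e^{-p\alpha|z|^A}}{(1+|z|)^{p(A-1)}}\,dA\sim\|h\|_{p,\alpha,A}^p.
\]

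The remaining point --- which I expect to be the main obstacle --- is the a priori membership $f\in\cf^p_{\alpha,A}$: the Stokes/boundary-term argument behind Proposition \ref{LPF} genuinely uses $fe^{g/\la}\in\cf^p_{\alpha,A}$, so it cannot be applied to $f=R_{\la,g}h$ before $f$ is known to lie in the space. I would handle this by density. When $h$ is a polynomial, parametrizing the segment $[0,z]$ by $\z=tz$ and using $\Re\bigl((g(z)-g(tz))/\la\bigr)\le|b/\la|\,|z|^A+O(|z|^{A-1})$ for $t\in[0,1]$ yields $|f(z)|\lesssim(1+|z|)^{N}e^{|b/\la|\,|z|^A+C|z|^{A-1}}=O(e^{\alpha'|z|^A})$ for some $\alpha'<\alpha$, so $f\in\cf^p_{\alpha,A}$; hence the displayed estimate gives $\|R_{\la,g}h\|_{p,\alpha,A}\lesssim\|h\|_{p,\alpha,A}$ uniformly over the dense subspace of polynomials. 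Since $h\mapsto R_{\la,g}h$ is continuous from $\cf^p_{\alpha,A}$ into the space of entire functions with the topology of uniform convergence on compacta (clear from the integral formula, and norm convergence in $\cf^p_{\alpha,A}$ implies uniform convergence on compacta), an approximation argument extends this bound to all $h\in\cf^p_{\alpha,A}$. Thus $R_{\la,g}$ is bounded on $\cf^p_{\alpha,A}$; as $T_g$ is bounded, the defining property of $R_{\la,g}$ makes it the two-sided inverse of $I-\frac1\la T_g$, so $\la\notin\sigma(T_g)$. Combined with the inclusion established above, this establishes (\ref{spectru}) in the remaining case $b\ne0$ and completes the proof.
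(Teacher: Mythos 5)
Your overall strategy is the paper's: the same lower inclusion via $R_{\la,g}1=e^{g/\la}$, and the same resolvent bound via Proposition \ref{LPF}, Proposition A, an a priori growth estimate for polynomial data, and density of polynomials. The genuine gap is your claim that Proposition \ref{LPF} ``and its proof remain valid for an arbitrary polynomial $g$ of degree $\le A$ with leading coefficient $b$'' because the argument ``only uses $|\bar\partial w|\gtrsim|z|^{A-1}$ for large $|z|$''. That is not how the proof runs: the lower bound (\ref{eq:star}) is used globally --- $1/\bar\partial w$ enters the Stokes identity (\ref{stokes}), the H\"older step (\ref{I1}), and the term $\int_\C |f|^p\,|\bar{\partial}^2w|\,|\bar{\partial}w|^{-2}e^w\,dA$ in (\ref{eq:assumed_ineq}), all over the whole plane --- and for a non-monomial symbol $\bar\partial w$ can vanish at points $z\neq 0$. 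Indeed, $\bar\partial w(z)=0$ exactly when $g'(z)z/\la=\alpha A|z|^A$; for $g(z)=bz^A$ with $|b/\la|<\alpha$ this forces $z=0$, but already for $A=2$, $g(z)=bz^2+cz$ with $b/\la,\,c/\la>0$ real, there is a zero at $x=\frac{c/\la}{2(\alpha-b/\la)}$ on the positive axis. Near such a zero $|\bar\partial w|\sim|z-z_0|$, so $|\bar{\partial}^2w|/|\bar{\partial}w|^2$ is not locally integrable and the chain (\ref{stokes})--(\ref{eq:assumed_ineq}) breaks down; your extended Proposition \ref{LPF} is therefore an unproved statement, not a remark. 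Note also that there is no cheap workaround by absorbing the lower-order part of $g$ into $f$ (writing $fe^{g/\la}=(fe^{p_{A-1}/\la})e^{bz^A/\la}$ brings $f$ itself back on the right-hand side through the derivative), so this step genuinely needs an argument.

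The paper closes exactly this hole by a reduction you skipped: write $g=bz^A+p_{A-1}$ and observe $I-\frac1\la T_g=(I-\frac1\la T_{bz^A})-\frac1\la T_{p_{A-1}}$, where $T_{p_{A-1}}$ is compact on $\cf^p_{\alpha,A}$ and both $I-\frac1\la T_g$ and $I-\frac1\la T_{bz^A}$ are injective (the point spectrum of a Volterra-type operator is empty); hence the nonzero spectra of $T_g$ and $T_{bz^A}$ coincide and one may assume $g(z)=bz^A$, for which (\ref{eq:star}) does hold on all of $\C$. If you insert this compact-perturbation step (or supply a proof of your extended Proposition \ref{LPF}), the rest of your argument --- the sector computation for the lower inclusion, the segment estimate showing $R_{\la,g}h\in\cf^p_{\alpha,A}$ for polynomial $h$, and the density argument with continuity into the compact-open topology --- is correct and matches the paper, your handling of the a priori membership being, if anything, slightly more explicit than the paper's.
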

\begin{proof}
If $A$ is not a positive integer, then $T_g$
compact $\cf^p_{\alpha,A}$ once it is bounded (see \cite{ocpel}), and since it has no eigenvalues, its spectrum is $\{0\}$. 

Thus, the only interesting case is $A\in\mathbb{N}$, i.e. $g$ is of the form $g(z)=bz^A+p_{A-1}(z)$ where $p_{A-1}$ is a polynomial of degree $A-1$.
Notice that for $\la\neq 0$ we have
$$I-\frac{1}{\la}T_g=(I-\frac{1}{\la}T_{bz^A}) -\frac{1}{\la}T_{p_{A-1}} ,$$
and since the operators $I-\frac{1}{\la}T_g$ and $I-\frac{1}{\la}T_{bz^A}$ are injective, while $T_{p_{A-1}} $ is compact on $\cf^p_{\alpha,A}$,
the study of the spectrum of $T_g$ reduces in this case to study of the spectrum of $T_{bz^A}$. 
We can therefore assume without loss of generality that $g(z)=bz^A$.

Now it is easy to see that the spectrum of $T_g$ contains the set in the right-hand side of (\ref{spectru}). Indeed, as explained in Introduction, 
if $\la$  belongs to  the resolvent  set of $T_g $ then   $R_{\la,g} 1=e^{bz^A/\la}\in\cf^p_{\alpha, A}$, that is
$$\int_{\C}e^{p\Re \left(\frac{bz^A}{\la}\right)-p\alpha|z|^A} dA(z)<\infty.$$
A straightforward argument using polar coordinates shows that this fails if  $\left|\frac{b}{\la}\right|>\alpha$,
and hence $\overline{D(0,|b|/\alpha)}\subseteq \sigma(T_g)$.

\noindent To prove the reverse inclusion, we apply Proposition \ref{LPF}. We show that if $\left|\frac{b}{\la}\right|<\alpha$, 
 then the formal resolvent $R_{\la,g}$, given by (\ref{resolvent}), is a bounded operator on $\cf^p_{\alpha,A}$.
We claim that, for any polynomial $f$,  
$$
e^{g/\la}\int_0^z e^{-g(\z)/\la} f'(\z) d\z\in \cf^p_{\alpha,A}.
$$ 
Indeed, this follows from the estimate
$$
\left |e^{g/\la}\int_0^z e^{-g(\z)/\la} f'(\z) d\z\right|
\le\left| \int_0^1 e^{\frac{b}{\la}z^A(1-t^A)} f'(tz) z dt  \right|\le e^{\frac{|b|}{|\la|}|z|^A} \int_0^1 |zf'(tz)| dt,
$$
taking into account $|{b}/{\la}|<\alpha$.

We may now apply Proposition \ref{LPF} and, subsequently, Proposition A  to obtain
\begin{eqnarray*}
\|R_{\la,g} f\|_{p,\alpha,A}&\le& |f(0)| \|e^{g(z)/\la}\|_{p,\alpha,A}+ \Bigl\|e^{g(z)/\la}\int_0^z e^{-g(\z)/\la} f'(\z) d\z\Bigr\|_{p,\alpha,A}\\
&\lesssim& \|f\|_{p,\alpha,A}+ \Bigl(\int_\C \frac{|f'(z)|^p}{(1+|z|)^p} e^{-p\alpha |z|^A} dA(z) \Bigr)^{1/p}\lesssim \|f\|_{p,\alpha,A}.
\end{eqnarray*}
Since polynomials are dense in $\cf^p_{\alpha,A}$ (see e.g. \cite{ocpel}), it follows that $R_{\la,g}$ is bounded, and, with this,
 the proof is complete. 
\end{proof}
\medskip

\end{document}